\documentclass[conference]{IEEEtran}
\IEEEoverridecommandlockouts

\usepackage{cite}
\usepackage{amsmath,amssymb,amsfonts}
\usepackage{algorithmic}
\usepackage{graphicx}
\usepackage{textcomp}
\usepackage{xcolor}
\usepackage{balance}

\usepackage{bm}
\usepackage{amsthm}

\usepackage{mathtools}
\usepackage{enumitem}
\usepackage{mathrsfs}

\usepackage{amsthm}

\theoremstyle{plain}
\newtheorem{theorem}{Theorem}

\newtheorem{lemma}[theorem]{Lemma}

\theoremstyle{definition}

\newtheorem{assumption}{Assumption}

\newcommand{\cA}{\mathcal{A}}
\newcommand{\cB}{\mathcal{B}}

\newcommand{\cE}{\mathcal{E}}
\newcommand{\cF}{\mathcal{F}}

\newcommand{\cH}{\mathcal{H}}

\newcommand{\cL}{\mathcal{L}}

\newcommand{\cX}{\mathcal{X}}

\newcommand{\bbR}{\mathbb{R}}
\newcommand{\bbN}{\mathbb{N}}

\newcommand{\dd}{\mathrm{d}}

\newcommand{\eps}{\epsilon}
\newcommand{\normal}{\mathsf{N}}

\newcommand{\E}{\mathbb{E}}
\renewcommand{\P}{\mathbb{P}}

\DeclarePairedDelimiter\bkt{[}{]}     
\makeatletter
\newcommand{\@exstar}[1]{\E \bkt*{#1}}
\newcommand{\@exnostar}[2][]{\E \bkt[#1]{#2}}
\newcommand{\ex}{\@ifstar\@exstar\@exnostar}
\makeatother

\makeatletter
\newcommand{\@prstar}[1]{\P \bkt*{#1}}
\newcommand{\@prnostar}[2][]{\P \bkt[#1]{#2}}
\newcommand{\pr}{\@ifstar\@prstar\@prnostar}
\makeatother

\DeclareMathOperator{\cov}{\mathsf{Cov}}

\DeclareMathOperator{\var}{\mathsf{Var}}

\def\BibTeX{{\rm B\kern-.05em{\sc i\kern-.025em b}\kern-.08em
    T\kern-.1667em\lower.7ex\hbox{E}\kern-.125emX}}

\usepackage[
    draft,
    bookmarks=false
]{hyperref}

\newcommand{\wenxuan}[1]{\textcolor{red}{Wenxuan:~#1}}

\begin{document}

\title{Free Energy Universality in Tensor Estimation\\ via Generic Chaining
\thanks{This research was supported in part by NSF Grants 2308445 and 2444150.}
}

\author{
	\IEEEauthorblockN{
		Wenxuan Zou\textsuperscript{*} and Galen Reeves\textsuperscript{\textdagger\textdaggerdbl}
	}
	\IEEEauthorblockA{
		Departments of Physics\textsuperscript{*}, 
		Statistical Science\textsuperscript{\textdagger}, 
		and Electrical and Computer Engineering\textsuperscript{\textdaggerdbl},
		Duke University
	}
}

\maketitle

\begin{abstract}
We study high-dimensional inference problems with tensor-structured data and establish conditions under which their free energy can be approximated by that of a Gaussian comparison model. Our framework applies to models with independent observations and mismatch between the data-generating distribution and the statistical model. The results extend prior work beyond matrix settings and accommodate scaling regimes where the model parameters depend on the dimension.

A key technical contribution is the use of generic chaining to control remainder terms arising from likelihood expansions over tensor-structured parameter spaces. As an application, we establish free energy universality for binary hypergraph models under the minimal assumption of diverging average degree, showing that their asymptotic behavior coincides with that of a Gaussian tensor model, even under model mismatch.
\end{abstract}

\begin{IEEEkeywords}
universality, high-dimensional inference, free energy, generic chaining
\end{IEEEkeywords}

\section{Introduction}

A central goal in high-dimensional statistical inference is to characterize the asymptotic behavior of estimators and posterior distributions, including quantities such as mutual information and mean squared error. While many models admit precise characterizations under Gaussian assumptions, extending these results to non-Gaussian and discrete observation models is more difficult, particularly in the presence of higher-order interactions.

A natural approach is to study the \emph{free energy}, which governs the asymptotic behavior of posterior distributions and, via standard perturbation arguments, yields bounds on mean-square error and related performance metrics; see e.g.,~\cite{reeves:2020,guionnet2025estimating,chen:2026a}.

In matrix-valued models, a substantial body of work has shown that the free energy often exhibits a \emph{universality} property: complex observation models can be replaced by Gaussian surrogates without affecting asymptotic behavior~\cite{korada:2010,korada:2011,krzakala:2016,deshpande:2017,lesieur:2017b,lelarge:2018,reeves:2019a,mergny:2024a,guionnet2025estimating,guionnet:2025}. However, these results are largely restricted to pairwise interactions, and much less is known for tensor-structured models with higher-order interactions.


\subsection{Overview of Contributions}

In this paper, we develop a general comparison framework for tensor estimation models with independent observations and $p$-th order interactions. Our main result (Theorem~\ref{thm:main_result}) provides conditions under which the free energy of a general (possibly mismatched) model is well approximated by that of a Gaussian comparison problem. This extends prior universality results beyond matrix settings and accommodates dimension-dependent scaling regimes.

A key technical contribution is a new approach for controlling remainder terms arising from likelihood expansions over tensor-structured parameter spaces. We show that these terms can be bounded using generic chaining \cite{talagrand:2005}  with two metrics that separately capture sub-Gaussian and sub-exponential behavior (Theorem~\ref{thm:chaining_bound}). This distinction is essential for obtaining sharp bounds in high-dimensional regimes.





As an application, we establish free energy universality for binary hypergraph models under model mismatch (Theorem~\ref{thm:sbm}). Under the minimal assumption of diverging average degree, we show that the free energy coincides with that of a corresponding Gaussian tensor model. We further demonstrate that replacing generic chaining with classical chaining, based on a single metric, leads to weaker bounds and restricts the comparison to denser regimes. This highlights the necessity of handling mixed-tail behavior at the level of the parameter space geometry in order to obtain sharp universality results.

\subsection{Related work} 

Universality in high-dimensional inference has been extensively studied using extensions of the Lindeberg principle~\cite{chatterjee:2006,korada:2011}, particularly in matrix and network models~\cite{krzakala:2016,deshpande:2017,lesieur:2017b,lelarge:2018,reeves:2019a,mergny:2024a,guionnet2025estimating,guionnet:2025}. For tensor estimation problems, the limiting behavior under Gaussian models has been established at various levels of generality ~\cite{lesieur:2017b,barbier:2017_tensor_MI, benarous:2020,chen:2022_finite-rank,rossetti:2025,rossetti:2025a}. A version of the universality results developed in this paper was conjectured in~\cite{lesieur:2017b}; we resolve this conjecture and extend it to dimension-dependent models.

Universality phenomena have also been widely studied in statistical physics, particularly for spin glass models~\cite{chatterjee:2006,carmona:2006,talagrand:2011,panchenko:2013}, where one typically considers a fixed Hamiltonian and shows that Gaussian disorder can be replaced by independent non-Gaussian components without affecting the limiting free energy. In contrast, our setting allows both the data distribution and the statistical model to vary with the dimension, requiring comparisons that account for changes in the Hamiltonian induced by mismatch and scaling.

Complementary notions of universality have also been developed for specific algorithms, most notably approximate message passing~\cite{bayati:2012a,lesieur:2015b,lesieur:2017b,chen2021universality,dudeja:2023,wang2024universality}.
In some cases, these results imply free energy universality, but in others, computational-to-statistical gaps prevent algorithmic universality from extending to the level of the free energy.



\section{Problem Formulation} 

\subsection{Tensor Estimation with Mismatched Model}
We study tensor estimation under model mismatch. 
For each problem size $n \in \bbN$, let the parameter space be $\Theta_n = [-1,1]^n$. For $\theta \in \Theta_n$, define the $p$-th order tensor $\eta(\theta) \in \bbR^{n^p}$ by 
\[
\eta_\alpha(\theta)=n^{\frac{1-p}{2}}\,  \theta_{\alpha_1} \cdots \theta_{\alpha_p}, \quad \alpha \in [n]^p
\] 
This normalization ensures that the effective signal-to-noise ratio is order one as $n \to \infty$.

\smallskip
\noindent 
\textbf{Data Distribution:}  For each $\theta \in \Theta_n$, the observations $\{X_\alpha\}_{\alpha \in [n]^p}$ are independent and satisfy
\begin{align*}
X_\alpha \overset{\mathrm{ind}}{\sim} p_n(\cdot \mid \eta_\alpha(\theta))
\end{align*} 
where $p_n(\cdot \mid t), t \in \bbR$ is a family of densities with respect to common base measure on sample space $\cX$.

\smallskip
\noindent \textbf{Statistical Model:} Inference is performed with respect to a prior distortion $\pi_n$ on $\Theta_n$ and postulated log-likelihood of the form 
\begin{align}
L_n(\theta, x) = \sum_{\alpha \in [n]^p} f_n(\eta_\alpha(\theta), x_\alpha) \label{eq:Ln}
\end{align}
where $f_n \colon \bbR \times \bbR \to \bbR$ is a suitably regular function.  The corresponding free energy is defined as
\begin{align*}
F_n(\theta)  \coloneqq \frac{1}{n} \E_{\theta}\left[ \log \int e^{L_n(X, \theta)} \pi_n(\dd \theta)  \right].
\end{align*}
where the expectation is taken under the data distribution. 

\smallskip
\noindent \textbf{Matched Setting:} 
The data distribution and statistical model are said to be \emph{matched} if the postulated log-likelihood corresponds to the log-likelihood ratio of the data distribution relative to $\theta = 0$, i.e.,
\begin{align*}
f_n(t,x) = f_n^*(t,x) \coloneqq \log \frac{p_n(x \mid t)}{p_n(x \mid 0)}.
\end{align*}


\subsection{Gaussian Comparison}

We compare the above model with a Gaussian tensor estimation problem.

\smallskip
\noindent 
\textbf{Comparison Data Distribution:} For each $\theta \in \Theta$, the observations  $\{Y_\alpha\}_{\alpha \in [n]^p}$ are independent and satisfy
\begin{align*}
Y_\alpha \overset{\mathrm{ind}}{\sim} \normal( a_*\,  \eta_\alpha (\theta), \sigma_*^2  ) 
\end{align*}
for parameters $(a_*, \sigma_*) \in \bbR \times (0,\infty)$.

\smallskip
\noindent \textbf{Comparison Statistical Model:} 
Inference is performed with respect to the same prior $\pi_n$ and a log-likelihood of the form
\begin{align*}
L^G_n(\theta, y) &=\!\! \sum_{\alpha \in [n]^p} \! g(\eta_\alpha(\theta),y_\alpha) , \quad g(t,y) = \frac{a yt - \frac{1}{2} a^2t^2}{\sigma^{2}}.
\end{align*}
for parameters $(a,\sigma) \in \bbR \times (0,\infty)$. The corresponding free energy is
\begin{align*}
F^G_n(\theta)  \coloneqq \frac{1}{n} \E_{\theta}\left[ \log \int e^{L^G_n(\theta,Y)} \pi_n(\dd \theta)  \right].
\end{align*}

\smallskip
\noindent \textbf{Matched Setting:} The comparison data distribution and comparison model are matched when $(a_*,\sigma_*) = (a, \sigma)$.

\section{Main Results} 

We consider the high-dimensional setting where $n \to \infty$. Our first result provides a general comparison theorem.

\begin{assumption}\label{assump:free_energy_univer}
For a given tuple $(a_*, \sigma_*, a,\sigma)$, there exists a sequence of statistics $S_n \colon \cX \to \bbR$ and a sequence $\delta_n = o_n(1)$ such that the following hold uniformly for all $\theta \in \Theta_n$:
\begin{enumerate}[label=1.\arabic*,leftmargin=*,]

\item  \textbf{Remainder condition.} \label{ass:remainder} The remainder $R_n \colon \bbR \times \cX \to \bbR$ defined by $R_n(t,x) \coloneqq f_n(t,x)- g(t,S_n(x))$ satisfies 
\begin{align*} 
\E_\theta\bigg[  \sup_{t \in T_n}  \Big| \sum_{\alpha \in [n]^p } R_n(t_\alpha, X_\alpha)\Big |\bigg] \lesssim  n\, \delta_n
\end{align*}
where $T_n \coloneqq \{ \eta(\theta) \mid \theta \in\Theta_n\}$.

\item \textbf{Moment matching.}\label{ass:moment} For all $\alpha \in [n]^p$, the first three moments of $S_n(X_\alpha)$ under the data distribution satisfy \vspace{-.1in}
\begin{align*}
   \big| \E_\theta[S_n(X_\alpha)] - a_* \, \eta_\alpha(\theta) \big|  & \lesssim n^{\frac{1-p}{2}} \delta_n \\
   \big| \var_\theta[S_n(X_\alpha)] - \sigma_*^2  \big|  & \lesssim \delta_n \\
   \E_\theta\Big[ \big| S_n(X_\alpha)- \E_\theta[S_n(X_\alpha)] \big|^3\Big]   & \lesssim n^{\frac{p-1}{2}} \delta_n 
\end{align*}
\end{enumerate}

 \end{assumption}

 \begin{theorem}\label{thm:main_result}
Under Assumption~1 we have  
\begin{align*}
\sup_{\theta \in \Theta_n} |F_n(\theta) - F_n^G(\theta) | \lesssim \delta_n 
\end{align*}
\end{theorem}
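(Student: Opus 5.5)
The argument has two steps. First, the remainder condition replaces the postulated log-likelihood by its Gaussian-form surrogate evaluated at the statistics $S_n(X_\alpha)$. Second, a Lindeberg swap replaces the independent variables $S_n(X_\alpha)$ by Gaussians $Y_\alpha \sim \normal(a_*\eta_\alpha(\theta),\sigma_*^2)$, using the three moment conditions.

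\textbf{Step 1 (reduction to a function of $S_n$).} Fix $\theta$ and write $\vartheta$ for the integration variable under $\pi_n$. Define
\[
\Phi(z) \coloneqq \frac1n \log \int \exp\Big(\sum_\alpha g(\eta_\alpha(\vartheta), z_\alpha)\Big)\, \pi_n(\dd\vartheta), \qquad z\in\bbR^{n^p}.
\]
Since $L_n(\vartheta,X) = \sum_\alpha g(\eta_\alpha(\vartheta),S_n(X_\alpha)) + \sum_\alpha R_n(\eta_\alpha(\vartheta),X_\alpha)$, the log-partition function changes by at most $\sup_{t\in T_n}|\sum_\alpha R_n(t_\alpha,X_\alpha)|$ pointwise. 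Taking expectations and applying Assumption~\ref{ass:remainder} gives
\[
|F_n(\theta) - \E_\theta\Phi(S)| \lesssim \delta_n, \qquad S_\alpha \coloneqq S_n(X_\alpha).
\]
Because $F_n^G(\theta) = \E\Phi(Y)$, it remains to show $|\E\Phi(S)-\E\Phi(Y)|\lesssim\delta_n$ uniformly in $\theta$.

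\textbf{Step 2 (mean and variance correction).} The $S_\alpha$ are independent, with means $\mu_\alpha$ and variances $v_\alpha$. I first compare $\E\Phi(S)$ with $\E\Phi(Z)$, where $Z_\alpha$ are independent Gaussians with the same mean and variance as $S_\alpha$. I then compare $Z$ with $Y$ via Gaussian interpolation in the parameters.

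The derivatives of $\Phi$ are posterior cumulants. Writing $\langle\cdot\rangle$ for the Gibbs average,
\[
\partial_\alpha\Phi = \frac{a}{n\sigma^2}\langle\eta_\alpha\rangle, \qquad \partial^2_\alpha\Phi = \frac{a^2}{n\sigma^4}\big(\langle\eta_\alpha^2\rangle-\langle\eta_\alpha\rangle^2\big),
\]
and $\partial^3_\alpha\Phi$ is the corresponding third cumulant with prefactor $a^3/(n\sigma^6)$. Since $|\eta_\alpha|\le n^{(1-p)/2}$, we get $|\partial^k_\alpha\Phi|\lesssim n^{-1}n^{k(1-p)/2}$.

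Interpolating the means $\mu_\alpha \to a_*\eta_\alpha(\theta)$ costs at most
\[
\sum_\alpha \|\partial_\alpha\Phi\|_\infty\, n^{(1-p)/2}\delta_n \lesssim n^p\cdot n^{-1}n^{(1-p)/2}\cdot n^{(1-p)/2}\delta_n = \delta_n .
\]
Interpolating the variances $v_\alpha \to \sigma_*^2$ (via the heat equation, $\partial_v \E\Phi = \tfrac12 \E\partial^2\Phi$) costs at most $\sum_\alpha n^{-1}n^{1-p}\delta_n = \delta_n$.

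\textbf{Step 3 (Lindeberg swap).} I replace $S_\alpha$ by $Z_\alpha$ one coordinate at a time and Taylor expand to third order in the swapped coordinate. Independence and the matching first two moments cancel the terms up to second order. The error per coordinate is bounded by
\[
\|\partial_\alpha^3\Phi\|_\infty\big(\E|S_\alpha-\mu_\alpha|^3+\E|Z_\alpha-\mu_\alpha|^3\big).
\]
The Gaussian third moment is $O(1)$, and by the third condition of Assumption~\ref{ass:moment} the non-Gaussian one is at most $n^{(p-1)/2}\delta_n$. Summing over $n^p$ coordinates gives
\[
n^p\cdot n^{-1}n^{3(1-p)/2}\cdot \big(n^{(p-1)/2}\delta_n + O(1)\big) = \delta_n + O\big(n^{(1-p)/2}\big).
\]
For $p\ge2$ the residual $n^{(1-p)/2}$ is $O(n^{-1/2})$. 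It is not automatically $\lesssim\delta_n$, which is the main obstacle.

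The cleaner route is the Gaussian-interpolation form of Lindeberg \`a la Chatterjee. It bounds the error by third-derivative terms weighted by $\E|S_\alpha-\mu_\alpha|^3$ together with the Gaussian counterpart, and in that form the Gaussian third moment does not enter additively. If a purely Gaussian term does survive, I would either absorb it by reading $\delta_n$ as at least $n^{-1/2}$ (the implicit constants in $\lesssim$ permit this when $\delta_n\gtrsim n^{-1/2}$), or refine the expansion to use only the centered third moment of $S_\alpha$. Uniformity in $\theta$ holds because all derivative bounds are uniform over $\Theta_n$ and the assumptions are uniform.
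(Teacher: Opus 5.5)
Steps 1 and 2 are correct, and the moment-matched Taylor swap in Step 3 is set up correctly, but the proposal stops short of a proof. You leave the Gaussian third-moment contribution $O(n^{(1-p)/2})$ unresolved and claim it is ``not automatically $\lesssim\delta_n$.'' You then offer only hedged remedies: an unspecified Chatterjee-type refinement, or assuming $\delta_n\gtrsim n^{-1/2}$. That assumption is an extra hypothesis the theorem does not grant, and for $p\ge 3$ it is stronger than what you need.

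The claim is false, and the missing step is one line. Because $Z_\alpha$ has the same variance $v_\alpha$ as $S_\alpha$, Lyapunov's (Jensen's) inequality gives $\E|Z_\alpha-\mu_\alpha|^3=\sqrt{8/\pi}\,v_\alpha^{3/2}\le\sqrt{8/\pi}\,\E|S_\alpha-\mu_\alpha|^3\lesssim n^{(p-1)/2}\delta_n$. So the Gaussian term is dominated by the non-Gaussian one, and the total swap error is $n^p\cdot n^{-1}n^{3(1-p)/2}\cdot n^{(p-1)/2}\delta_n=\delta_n$. Equivalently, the variance and third-moment conditions in Assumption~\ref{ass:moment}, together with $\sigma_*>0$ and $\delta_n\to0$, already force $\delta_n\gtrsim n^{(1-p)/2}$. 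That is exactly the threshold needed to absorb your residual.

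With that inserted, your argument is valid and takes a different route from the paper's. The paper swaps $S_n(X_\alpha)$ directly for $Y_\alpha\sim\normal(a_*\eta_\alpha(\theta),\sigma_*^2)$ one coordinate at a time via Lemma~\ref{lemma:stein}. There, the interpolation $W_s$ combined with the approximate Gaussian integration-by-parts bound of \cite{carmona:2006} handles the mean mismatch, variance mismatch and non-Gaussianity in a single step, and only $\E|X-\E X|^3$ appears. This is the ``refined expansion'' you allude to. You instead separate a Gaussian-to-Gaussian mean/variance correction (via the heat equation) from a classical Lindeberg swap with matched first two moments. Your route avoids the approximate integration-by-parts lemma, at the cost of needing the Lyapunov observation above to control the Gaussian third moment.
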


Theorem~\ref{thm:main_result} provides general conditions under which the limiting behavior of a tensor estimation problem coincides with that of a Gaussian model. In particular, results established under Gaussian assumptions carry over whenever the conditions of the theorem are satisfied.


While the moment matching condition is relatively standard (arising from the generalized Lindeberg approach) the condition on the remainder term poses significant challenges, especially in the tensor setting ($p > 2$). One of our main contributions is to control this term using generic chaining, as described in the next section.

\subsection{Controlling the Remainder via Generic Chaining}\label{sec:chaining}

This section provides sufficient conditions under which Assumption~\ref{ass:remainder} holds. To simplify notation we suppress the dependence on $n$. For each $\theta \in \Theta_n$, we decompose the remainder as
\begin{align}\label{eq:R_decomposition}
R(t_\alpha, X_\alpha) = \bar{R}_\alpha(t) + Z_\alpha(t),
\end{align}
where for each $\alpha$,  $\{\bar{R}_\alpha(t)\}_{t \in T_n}$ is a centering process that is measurable with respect to $X_\alpha$, and $\{Z_\alpha(t)\}_{t \in T_n}$ is a mean-zero process satisfying $
\E_\theta[ Z_\alpha(t) ] = 0$ for all $t \in T_n$. 
A canonical choice is $\bar{R}_{\alpha}(t) = \E_\theta[ R(t_\alpha, X_\alpha)]$, though other centering schemes may be preferable depending on the application. 

Let $(c_n,v_n) \in (0,\infty)^2$ and define two metrics $d_1$ and $d_2$ on $T_n$ according to  
\begin{align}\label{eq:two_metrics}
d_1(t,u) &= \frac{c_n}{\sqrt{s_n}}\|t-u\|_\infty, ~~d_2(t, u) = \sqrt{\frac{v_n}{s_n}}  \|t- u\|_2,
\end{align}
where $\|\cdot\|_q$ denotes the entry-wise $\ell_q$ norm and $s_n=n^{p-1}$.



\begin{assumption}\label{cond:bernstein} 
For each $n \in \bbN$, there exist $(v_n,c_n,\delta_n)$ such that the following holds uniformly for all $\theta \in \Theta_n$:
\begin{enumerate}[label=2.\arabic*,leftmargin=*,]

\item \textbf{Centering condition.}\label{cond:2_centering} 
    \begin{align*}
  \E_\theta\bigg[  \Big| \sup_{t \in T_n} \sum_{\alpha \in [n]^p }  \bar{R}_\alpha(t) \Big|\bigg] \lesssim  n\, \delta_n
 \end{align*}
 \item \textbf{Regularity condition.}\label{cond:2_integ} $t \mapsto \sum_{\alpha \in [n]^p }  Z_\alpha(t) $ is continuous a.s., and there exists $t_0 \in T_n$ such that
     \begin{align*}
  \E_\theta\bigg[  \Big| \sum_{\alpha \in [n]^p }  Z_\alpha(t_0) \Big|\bigg] \lesssim  n\, \delta_n
 \end{align*}
\item \textbf{Mixed-tail increment condition.}\label{cond:2_tails}
\begin{align*}
	\sum_{\alpha\in[n]^p}\!\!\E_\theta \left[ \big | Z_\alpha(t)  - Z_\alpha(u) \big|^q\right] \le \frac{q!}{2} d_{2}^2(t,u)\,   d^{q-2}_{1}(t,u)
\end{align*}
for all $t,u \in T_n$ and integers $q\ge 2$.
\end{enumerate}

\end{assumption}


\begin{theorem}\label{thm:chaining_bound}
Under Assumption~\ref{cond:bernstein}, we have  
    \begin{align*}
    \sup_{\theta \in \Theta_n}   \E_\theta\bigg[  \sup_{t \in T_n}  \Big| \sum_{\alpha \in [n]^p } & R_n(t_\alpha,  X_\alpha)\Big |\bigg]\\ 
     & \lesssim c_n\,n^{2-p}\, p 
    + \sqrt{v_n}\, n^{\frac{3-p}{2}}\,p + n\delta_n.
    \end{align*}
\end{theorem}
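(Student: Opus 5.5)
We split the remainder according to \eqref{eq:R_decomposition} and apply the triangle inequality:
\[
\E_\theta\Big[\sup_{t\in T_n}\Big|\sum_\alpha R_n(t_\alpha,X_\alpha)\Big|\Big]\le \E_\theta\Big[\sup_t\Big|\sum_\alpha \bar R_\alpha(t)\Big|\Big]+\E_\theta\Big[\sup_t\Big|\sum_\alpha Z_\alpha(t)\Big|\Big].
\]
The first term is at most $n\delta_n$ by Assumption~\ref{cond:2_centering}. (If that condition controls only $|\sup_t\cdot|$, we handle $\inf_t$ symmetrically; we treat it as giving the two-sided supremum.) For the second term, let $Z(t)=\sum_\alpha Z_\alpha(t)$ and write $\sup_t|Z(t)|\le |Z(t_0)|+\sup_{t,u}|Z(t)-Z(u)|$. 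The $t_0$ term is $\lesssim n\delta_n$ by Assumption~\ref{cond:2_integ}. Continuity of $Z$ lets us reduce the supremum to countable, and then finite, subsets of $T_n$, so that chaining applies.

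\textbf{Mixed-tail increments.} The summands $Z_\alpha(t)-Z_\alpha(u)$ are independent and mean zero, since each is measurable with respect to $X_\alpha$. Assumption~\ref{cond:2_tails} is then exactly Bernstein's moment condition for the sum. It gives $\log\E e^{\lambda(Z(t)-Z(u))}\le \lambda^2 d_2^2/(2(1-\lambda d_1))$ for $\lambda<1/d_1$, and hence the mixed tail
\[
\P\big(|Z(t)-Z(u)|\ge \sqrt{2x}\,d_2(t,u)+x\,d_1(t,u)\big)\le 2e^{-x}.
\]
Talagrand's generic chaining bound for processes with mixed tails then yields
\[
\E\sup_{t,u}|Z(t)-Z(u)|\lesssim \gamma_2(T_n,d_2)+\gamma_1(T_n,d_1).
\]

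\textbf{Bounding the functionals.} Here we need $\gamma_1(T_n,d_1)\lesssim c_n n^{2-p}p$ and $\gamma_2(T_n,d_2)\lesssim\sqrt{v_n}\,n^{(3-p)/2}p$. We bound each by its Dudley entropy integral and use the Lipschitz structure of $\theta\mapsto\eta(\theta)$ on $[-1,1]^n$.

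For the sup-norm, $\|\eta(\theta)-\eta(\theta')\|_\infty\le p\,n^{(1-p)/2}\|\theta-\theta'\|_\infty$, obtained by telescoping one coordinate at a time. So an $\epsilon$-net of the cube in $\ell_\infty$ of size $(3/\epsilon)^n$ gives $\log N(T_n,d_1,\epsilon)\lesssim n\log(1+ p c_n n^{1-p}/\epsilon)$. The diameter is $D_1=2c_n n^{1-p}$, so
\[
\gamma_1\lesssim\int_0^{D_1} n\log(1+pD_1/\epsilon)\,\dd\epsilon\lesssim n\,pD_1\sim p\,c_n n^{2-p}.
\]

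For the $\ell_2$ norm, $\|\eta(\theta)\|_2=n^{(1-p)/2}\|\theta\|_2^p$, and the telescoping bound gives $\|\eta(\theta)-\eta(\theta')\|_2\le p\,n^{(1-p)/2}\|\theta\|_2^{p-1}\vee\|\theta'\|_2^{p-1}\,\|\theta-\theta'\|_2\le p\,\|\theta-\theta'\|_2$, using $\|\theta\|_2\le\sqrt n$. Thus $d_2\le p\sqrt{v_n/s_n}\,\|\theta-\theta'\|_2$ on a set (the cube) of $\ell_2$-diameter $2\sqrt n$. Dudley's integral $\int\sqrt{n\log(1+\sqrt n/\epsilon)}\,\dd\epsilon$ over the cube gives $\sqrt n\cdot\sqrt n$, so
\[
\gamma_2\lesssim p\sqrt{v_n/s_n}\cdot n=p\sqrt{v_n}\,n^{(3-p)/2}.
\]
Because the entropy is taken on the $n$-dimensional parametrization rather than on $T_n\subset\bbR^{n^p}$, the log-factors from the polynomial-size net are absorbed into the integral constant.

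\textbf{Main obstacle.} The delicate step is the chaining theorem with two metrics. Dudley with a single metric loses, which is the point the paper stresses. I would use Talagrand's $\gamma_1+\gamma_2$ theorem, with admissible sequences built from nets of the cube at scales $2^{-k}$ simultaneously for both metrics. Both metrics are controlled by the same $\theta$-nets, so one sequence serves both, which avoids needing a common partition argument. Collecting the terms gives the stated bound uniformly in $\theta$.
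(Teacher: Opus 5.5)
Your proposal is correct and follows the paper's proof essentially step for step. You use the triangle inequality to peel off the centering term and $\hat{Z}(t_0)$ via Assumptions~\ref{cond:2_centering}--\ref{cond:2_integ}, and Bernstein's inequality to turn Assumption~\ref{cond:2_tails} into the mixed tail. You then apply Talagrand's two-metric theorem to get $\gamma_2(T_n,d_2)+\gamma_1(T_n,d_1)$, and bound these with the Lipschitz estimate $\|\eta(\theta)-\eta(\theta')\|_q\le p\,n^{\frac{1-p}{2}}n^{\frac{p-1}{q}}\|\theta-\theta'\|_q$ and Dudley-type entropy integrals on the cube.

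The differences are cosmetic. You transport covering numbers through $\theta\mapsto\eta(\theta)$ instead of invoking the Lipschitz-mapping lemma for $\gamma_\alpha$ directly. Your reading of Assumption~\ref{cond:2_centering} as controlling the two-sided supremum is exactly what the paper's triangle-inequality step implicitly uses.
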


By Theorem~\ref{thm:chaining_bound}, Assumption~\ref{ass:remainder} holds under Assumption~\ref{cond:bernstein} and the conditions $c_n \lesssim n^{p-1}\delta_n$ and $v_n \lesssim n^{p-1}\delta_n^2$, provided that the sequences $\delta_n$ in the two assumptions coincide.

\subsection{Application to Binary Hypergraph Model}\label{sec:hypergraph}

We specialize our results to a binary hypergraph model with independent observations
\[
X_\alpha \overset{\mathrm{ind}}{\sim} \mathrm{Bernoulli}\bigg(
\frac{d_n}{s_n} + \sqrt{\frac{d_n}{s_n}\Big(1-\frac{d_n}{s_n}\Big)}\,\sqrt{\lambda_*}\,\eta_\alpha(\theta)
\bigg),
\]
where $s_n = n^{p-1}$ is a normalization factor ensuring a non-degenerate high-dimensional limit, $d_n$ controls (to leading order) the average degree,  
and $\lambda_* \in (0, \infty)$ corresponds to the Fisher information at $\theta = 0$. 

For the data distribution, the log-likelihood ratio relative to $\theta = 0$ takes the form \eqref{eq:Ln}, with component-wise contribution
\begin{align*}
f_n^*(t,x) = x\log(1 + b_n \sqrt{\lambda_*} t) +  (1\!-\!x)\log\!\Big(1 -\!  \frac{\sqrt{\lambda_*}t}{b_n}  \Big) 
\end{align*}
where $b_n  =\sqrt{ s_n /d_n -1}$.

We consider a (possibly mismatched) statistical model with log-likelihood of the form \eqref{eq:Ln}, with
\begin{align}\label{eq:model_f}
f_n(t,x) = x\log(1 + b_n \sqrt{\lambda} t) +  (1\!-\!x)\log\!\Big(1 -\!  \frac{\sqrt{\lambda}t}{b_n}  \Big) 
\end{align}
where $\lambda \in (0, \infty)$. Under this formulation, the data distribution and statistical model are matched when $\lambda = \lambda_*$. 


\begin{theorem}\label{thm:sbm}
Given $(\lambda, \lambda_*) \in (0,\infty)^2$ let the parameters of the comparison be given by 
\begin{alignat*}{3}
a_* &= \sqrt{\lambda_*}, \quad   \sigma_* = 1, \quad a= \sqrt{\lambda},  \quad  \sigma = 1.
\end{alignat*}
Assume the average degree satisfies  $d_n \to \infty$ and $s_n -d_n \to \infty$. Then, 
    \begin{align*}
      \sup_{\theta \in\Theta_n}  |F_n(\theta) - F_n^G(\theta)| \lesssim \frac{1}{\sqrt{d_n}}+\frac{1}{\sqrt{s_n-d_n}}.
    \end{align*}
\end{theorem}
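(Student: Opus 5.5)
The proof will apply Theorem~\ref{thm:main_result} with $\delta_n \asymp d_n^{-1/2} + (s_n-d_n)^{-1/2}$. Assumption~\ref{ass:remainder} will be obtained through Theorem~\ref{thm:chaining_bound}. Throughout, write $q_n = d_n/s_n$ and $\tau_n = \sqrt{q_n(1-q_n)}$, so that $b_n = \sqrt{(1-q_n)/q_n}$, $q_n b_n = \tau_n$, and $\E_\theta[X_\alpha] = q_n + \tau_n\sqrt{\lambda_*}\,\eta_\alpha(\theta)$. Every $t \in T_n$ satisfies $\|t\|_\infty \le s_n^{-1/2}$. Hence $b_n|t_\alpha| \lesssim d_n^{-1/2}$ and $|t_\alpha|/b_n \lesssim (s_n-d_n)^{-1/2}$, so both logarithms in \eqref{eq:model_f} are uniformly Taylor-expandable for large $n$.

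\textbf{Step 1: choice of statistic and moment matching.} Take $S_n(x) = (x-q_n)/\tau_n$. A direct computation gives $x b_n - (1-x)/b_n = (x-q_n)/\tau_n$, so the linear term of $f_n$ is exactly $\sqrt{\lambda}\, t\, S_n(x)$. This matches the linear part of $g(t,S_n(x))$ with $a=\sqrt\lambda$ and $\sigma=1$.
\begin{itemize}
\item The mean condition holds exactly: $\E_\theta[S_n(X_\alpha)] = \sqrt{\lambda_*}\,\eta_\alpha$.
\item For the variance, write $\mu = \E_\theta[X_\alpha]$. Then $\var_\theta[S_n(X_\alpha)] - 1 = (\mu(1-\mu) - \tau_n^2)/\tau_n^2 = O(|t|(1-2q_n)/\tau_n + t^2/\tau_n^2)$. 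With $|t| \le s_n^{-1/2}$ this is $O(d_n^{-1/2} + (s_n-d_n)^{-1/2})$.
\item For the third moment, a centered Bernoulli variable gives $\E_\theta|S_n - \E_\theta S_n|^3 \lesssim \mu(1-\mu)/\tau_n^3 \lesssim 1/\tau_n$. Since $1/\tau_n \lesssim \sqrt{s_n}\,\delta_n = n^{(p-1)/2}\delta_n$, Assumption~\ref{ass:moment} holds.
\end{itemize}

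\textbf{Step 2: decomposition of the remainder.} Write $f_n(t,x) = x\,\phi_1(t) + (1-x)\,\phi_0(t)$, with $\phi_1(t) = \log(1+b_n\sqrt\lambda t)$ and $\phi_0(t) = \log(1-\sqrt\lambda t/b_n)$. Then $R_n(t,x) = f_n(t,x) - g(t,S_n(x))$ is affine in $x$ and has no linear-in-$t$ part. In Assumption~\ref{cond:bernstein}, take the canonical centering $\bar R_\alpha(t) = \E_\theta[R_n(t_\alpha,X_\alpha)]$. Then $Z_\alpha(t) = (X_\alpha - \mu_\alpha)\,\psi(t_\alpha)$, where
\[
\psi(t) = \phi_1(t) - \phi_0(t) - \sqrt\lambda\, t\,(b_n + 1/b_n).
\]

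\emph{Centering condition.} The quadratic term of $R_n$ is $-\tfrac{\lambda t^2}{2}\bigl(xb_n^2 + (1-x)/b_n^2 - 1\bigr)$. Its mean cancels at $\theta=0$ because $q_n b_n^2 + (1-q_n)/b_n^2 = 1$. What remains is $\tau_n\sqrt{\lambda_*}\,t\,(b_n^2 - b_n^{-2})$. Summed over the $n^p$ entries with $|t|^3 \le s_n^{-3/2}$, this contributes $n\, s_n^{-1/2}\,\tau_n (b_n^2 + b_n^{-2}) \lesssim n\delta_n$. The cubic Taylor remainders contribute, for instance, $n^p\, q_n\, b_n^3 s_n^{-3/2} \lesssim n\, d_n^{-1/2}$, together with the symmetric $1/b_n$ term. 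This gives Assumption~\ref{cond:2_centering}.

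\emph{Regularity condition.} Take $t_0 = 0$, so that $Z_\alpha(0) = 0$; continuity in $t$ is clear.

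\textbf{Step 3: mixed-tail increments (main obstacle).} Since $\psi(0)=\psi'(0)=0$, we have $|\psi'(t)| \lesssim (b_n^2 + b_n^{-2})|t|$ on the relevant range, so $\psi$ is Lipschitz there with constant $L_n \lesssim (b_n^2+b_n^{-2})\,s_n^{-1/2}$. Using $\E|X_\alpha-\mu_\alpha|^k \le \mu_\alpha(1-\mu_\alpha) \lesssim \tau_n^2$ for $k\ge 2$, we get
\[
\sum_\alpha \E_\theta|Z_\alpha(t)-Z_\alpha(u)|^k \lesssim \tau_n^2 L_n^2\,\|t-u\|_2^2\,\bigl(L_n\|t-u\|_\infty\bigr)^{k-2}.
\]
This is of the required form with $c_n \asymp b_n^2 + b_n^{-2}$ and $v_n \asymp \tau_n^2 (b_n^2 + b_n^{-2})^2$, after absorbing constants into the $q!/2$ factor.

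The delicate point is to check that these choices obey $c_n \lesssim s_n\delta_n$ and $v_n \lesssim s_n\delta_n^2$ simultaneously in both the sparse regime ($q_n\to0$) and the dense regime ($q_n\to1$). In the sparse case $c_n \approx s_n/d_n$ and $v_n \approx s_n/d_n$; the dense case is symmetric with $s_n-d_n$. Both requirements then hold, and the lower-order term in $L_n$ needs care to avoid losing a factor. This is exactly where the two metrics are needed: a single-metric bound would force $v_n$ to absorb $c_n^2$ and require denser graphs.

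Theorem~\ref{thm:chaining_bound} then yields Assumption~\ref{ass:remainder}, and Theorem~\ref{thm:main_result} gives the claimed bound.
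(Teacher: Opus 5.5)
Your proof is correct. It shares the paper's skeleton: Theorem~\ref{thm:main_result} with $\delta_n = d_n^{-1/2}+(s_n-d_n)^{-1/2}$, the same statistic $S_n(x)=b_nx+b_n^{-1}(x-1)=(x-q_n)/\tau_n$, $t_0=0$, and Theorem~\ref{thm:chaining_bound} for the remainder. The difference is how you split the remainder in Assumption~\ref{cond:bernstein}.

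\textbf{The paper's split.} It uses a non-canonical centering. It Taylor-expands $f_n$ to third order and puts only the centered Hessian term $Z_\alpha(t)=\tfrac12 V_\alpha t_\alpha^2$ into the chaining process. The random cubic Taylor term stays inside $\bar R_\alpha$ and is bounded crudely, entry by entry, via $\E_\theta[\sup_t|\bar R_\alpha(t)|]\lesssim s_n^{-1}\delta_n$. The mixed-tail condition then follows from three facts: $|V_\alpha|\lesssim b_n^2+b_n^{-2}$, $\E_\theta V_\alpha^2\lesssim s_n\delta_n^2$, and $|t_\alpha^2-u_\alpha^2|\le 2s_n^{-1/2}|t_\alpha-u_\alpha|$. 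These give $c_n=v_n=2\lambda s_n\delta_n^2$.

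\textbf{Your split.} You take the canonical centering, so $\bar R_\alpha$ is deterministic. All the randomness, including the cubic fluctuations, goes into $Z_\alpha=(X_\alpha-\mu_\alpha)\psi(t_\alpha)$. You control it through $\psi(0)=\psi'(0)=0$, $|\psi''|\lesssim b_n^2+b_n^{-2}$, and the Bernoulli bound $\E_\theta|X_\alpha-\mu_\alpha|^k\le\mu_\alpha(1-\mu_\alpha)$.

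\textbf{What each buys.} The paper's route keeps the chaining process linear in the fixed functions $t_\alpha^2$ with bounded coefficients. Only second-order information about $f_n$ enters the chaining step, and the argument shows why Assumption~\ref{cond:bernstein} allows a flexible centering. Your route removes the supremum from the centering expectation and needs only a Lipschitz estimate on $\psi$. Your parameters $c_n\asymp b_n^2+b_n^{-2}$ and $v_n\asymp\tau_n^2(b_n^2+b_n^{-2})^2$ are of the same order as the paper's. You also state explicitly that both logarithms are well defined for large $n$, which the paper leaves implicit.

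\textbf{A small simplification.} Your final parameter check does not need a sparse/dense case split, which would be incomplete if $q_n$ oscillates. Since $(b_n+b_n^{-1})^2=\tau_n^{-2}=s_n^2/(d_n(s_n-d_n))\le s_n\delta_n^2$, you get, uniformly in $n$,
\[
c_n\lesssim s_n\delta_n^2\le s_n\delta_n, \qquad v_n\lesssim\tau_n^{2}\,\tau_n^{-4}=\tau_n^{-2}\le s_n\delta_n^2 .
\]
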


A natural question is whether simpler methods, such as Bernstein-type inequalities combined with classical chaining (e.g.,~\cite[Lemma~13.1]{boucheron:2013}), would suffice to control the remainder terms. These approaches rely on a single metric to control both the sub-Gaussian and sub-exponential components in the remainder process. In the context of Theorem~\ref{thm:chaining_bound}, this restriction introduces an additional factor of $n^{p/2}$ in the term involving $c_n$, and consequently imposes significantly stronger requirements on the model parameters. In the hypergraph setting, $c_n$ grows as the graph becomes sparse, and bounds obtained via classical chaining would therefore require the average degree to scale at a prescribed rate in order to keep the bound sublinear.

By contrast, the generic chaining approach used in this paper, specifically the mixed-tail bounds in~\cite[Theorem 2.2.23]{talagrand:2014}, fully exploits the mixed-tail structure by handling $d_1$ and $d_2$ separately. This allows us to obtain the correct order and establish universality under the minimal assumption that the average degree diverges, without any restriction on its rate.

\section{Proof of Main Results}

\subsection{Proof of Theorem~\ref{thm:main_result}}
Introduce an intermediate log-likelihood and the corresponding free energy,
\begin{align*}
\tilde{L}_n(\theta,x) &\coloneqq    \sum_{\alpha\in[n]^p}\,g(\eta_\alpha(\theta),S_n(x_\alpha)),\\
\tilde{F}_n(\theta) &\coloneqq \frac{1}{n} \E_{\theta}\left[ \log \int e^{\tilde{L}_n(\theta,x)} \pi_n(\dd \theta)  \right].
\end{align*}

\subsubsection{Bound $|F_n(\theta)-\tilde{F}_n(\theta)|$ via Assumption~\ref{ass:remainder}}
By the Lipschitz continuity of the log-partition (log-sum-exp) functional and Jensen's inequality,
\begin{align*}
    |F_n(\theta)-\tilde{F}_n(\theta)|&\le \frac{1}{n}\E_\theta\Big[\,\sup_{\theta\in\Theta_n}|L_n(\theta,X)-\tilde{L}_n(\theta,X) |\, \Big]\\
    &\le\frac{1}{n}\E_\theta\bigg[  \sup_{t \in T_n}  \bigg| \sum_{\alpha \in [n]^p } R(t_\alpha, X_\alpha)\Big |\bigg].
\end{align*}
By Assumption~\ref{ass:remainder},
\begin{align}\label{eq:first_compare}
   \sup_{\theta\in\Theta_n} |F_n(\theta)-\tilde{F}_n(\theta)| \lesssim \delta_n.
\end{align}

\subsubsection{Bound $|F_n^G(\theta)-\tilde{F}_n(\theta)|$ via Assumption~\ref{ass:moment}}

\begin{lemma}\label{lemma:stein}
    Let $X$ and $Y$ be real-valued random variables, with $Y$ Gaussian. Let $\eta \sim \nu$, where $\nu$ is supported on a compact subset of $\bbR$ and $|\eta| \le b$ almost surely. Define $\phi(x) = \log \int \exp(x\eta)\nu(\dd \eta)$ and let $\Delta\coloneqq |\E[\phi(X)]-\E[\phi(Y)]|$,  \begin{equation}\label{eq:delta_stein}
        \begin{aligned}
        \Delta\le &|\E[X]-\E[Y]|b\\
        &+|\var(X)-\var(Y)|b^2 +4\E|X-\E[X]|^3b^3.
    \end{aligned}
    \end{equation}
\end{lemma}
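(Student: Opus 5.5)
Since $\phi$ is a log-moment-generating function, its derivatives are the cumulants of the tilted law $\nu_x(\dd\eta)\propto e^{x\eta}\nu(\dd\eta)$:
\begin{align*}
\phi'(x)=\langle\eta\rangle_x,\qquad \phi''(x)=\var_x(\eta),\qquad \phi'''(x)=\big\langle(\eta-\langle\eta\rangle_x)^3\big\rangle_x .
\end{align*}
Because $|\eta|\le b$, these satisfy $|\phi'|\le b$, $0\le\phi''\le b^2$ and $|\phi'''|\le 2b^3$ (the centered variable lies in $[-2b,2b]$, and a sharper bound is not needed). The plan is a one-step Lindeberg/Stein comparison using these bounds.

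Write $\mu_X=\E[X]$, $\mu_Y=\E[Y]$, and let $Y'=Y-\mu_Y+\mu_X$ be the Gaussian with mean shifted to match $X$. Split
\begin{align*}
\Delta\le|\E\phi(Y)-\E\phi(Y')|+|\E\phi(Y')-\E\phi(X)| .
\end{align*}
The mean shift contributes at most $\|\phi'\|_\infty|\mu_X-\mu_Y|\le b|\E X-\E Y|$. For the second term, $X$ and $Y'$ now share a mean, and I would Taylor expand $\phi$ to second order around $\mu_X$ with third-order remainder:
\begin{align*}
\E\phi(W)=\phi(\mu_X)+\tfrac12\phi''(\mu_X)\var(W)+\E\big[r(W)\big],\qquad |r(w)|\le\tfrac{\|\phi'''\|_\infty}{6}|w-\mu_X|^3,
\end{align*}
for $W\in\{X,Y'\}$. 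The first-order terms cancel, and the second-order difference is at most $\tfrac12 b^2|\var X-\var Y|\le b^2|\var X-\var Y|$.

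The remainders give at most $\tfrac{b^3}{3}\big(\E|X-\mu_X|^3+\E|Y-\mu_Y|^3\big)$. This is the main obstacle: the Gaussian third absolute moment is not controlled by that of $X$ alone, since $\var Y$ may differ from $\var X$. I would handle it as follows. We have $\E|Y-\mu_Y|^3=c\,(\var Y)^{3/2}$ with $c=2\sqrt{2/\pi}<2$. By Lyapunov, $(\var X)^{3/2}\le\E|X-\mu_X|^3$, so the remaining issue is only the discrepancy between $\var Y$ and $\var X$. Two routes address it.

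The first route uses a Gaussian interpolation in the variance. Compare $Y'$ with a Gaussian $Y''$ that has variance $\var X$. Gaussian integration by parts gives
\begin{align*}
|\E\phi(Y')-\E\phi(Y'')|\le\tfrac12\|\phi''\|_\infty|\var X-\var Y|,
\end{align*}
which is again a $b^2|\var X-\var Y|$ term. Then apply the Taylor argument to $X$ versus $Y''$, which now match in both mean and variance. The cubic remainders total at most $\tfrac{b^3}{3}\big(1+c\big)\E|X-\mu_X|^3\le 4b^3\E|X-\mu_X|^3$.

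Collecting constants, the three pieces contribute $b|\E X-\E Y|$ from the mean shift. The variance comparisons contribute at most $(\tfrac12+\tfrac12)b^2|\var X-\var Y|$, which fits within the stated coefficient $b^2$ as long as the Taylor step uses matched variances. The cubic remainders contribute at most $4b^3\E|X-\E X|^3$. Together these give \eqref{eq:delta_stein}.
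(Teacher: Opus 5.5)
Your proof is correct, but it takes a different route from the paper.

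\textbf{The paper's route.} It uses a single smart-path interpolation $W_s = s\E[X]+(1-s)\E[Y]+\sqrt{s}\tilde X+\sqrt{1-s}\tilde Y$ and differentiates in $s$. The Gaussian side is handled exactly by Gaussian integration by parts. The non-Gaussian side is handled by an approximate integration-by-parts bound (Lemma~3 of Carmona--Hu), $\E[\phi'(W_s)\tilde X]\le\sqrt{s}\,\E[\phi''(W_s)]\var(X)+\frac{3s}{2}\|\phi'''\|_\infty\E|\tilde X|^3$. The mean, variance and cubic errors all come out of one path. Because the cubic error involves only $X$, the Gaussian's third moment never has to be controlled.

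\textbf{Your route.} You do a classical Lindeberg replacement in three steps. First, a Lipschitz mean shift. Second, an exact Gaussian-to-Gaussian variance interpolation, costing $\frac12\|\phi''\|_\infty|\var(X)-\var(Y)|$. Third, a third-order Taylor comparison of $X$ with the fully moment-matched Gaussian $Y''$. In the last step the Gaussian cubic remainder is absorbed by Lyapunov's inequality $(\var X)^{3/2}\le\E|X-\E X|^3$. This is exactly why matching the variance before Taylor-expanding is the key move.

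\textbf{What each buys.} Your argument is more elementary and self-contained, with no external approximate-IBP lemma. With your sharper cumulant bounds $0\le\phi''\le b^2$ and $|\phi'''|\le 2b^3$ (the paper uses the looser $2b^2$ and $8b^3$), it gives stronger constants: $\frac12 b^2$ on the variance term and $\frac{1+2\sqrt{2/\pi}}{3}\,b^3<b^3$ on the cubic term. The paper's interpolation is the standard spin-glass device. It treats all discrepancies in one pass and extends more naturally when one wants to track $\E[\phi''(W_s)]$ along the path.

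\textbf{A bookkeeping correction.} In the $Y''$ route the second-order Taylor terms cancel exactly, so the variance discrepancy contributes only $\frac12 b^2$, not $(\frac12+\frac12)b^2$. Your preliminary Taylor comparison of $X$ with $Y'$ should be discarded rather than added. This only loosens your constant and does not affect correctness.
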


\begin{proof}
Denote by $\tilde{X}\coloneqq X-\E[X]$, $\tilde{Y}\coloneqq Y-\E[Y]$. 
Define $\|h\|_\infty\coloneqq\sup_{x\in\bbR}|h(x)|$ for real-valued function $h$. Define an interpolation $W_s = s \E[X] + (1-s)\E[Y] + \sqrt{s}\tilde{X} + \sqrt{1-s}\tilde{Y}$ for $s\in[0,1]$, then
\begin{align*}
    \E[\phi(X)] - \E[\phi(Y)] &= \int_0^1 \!\E[\phi'(W_s)]\big[\E[X]-\E[Y]\big]\dd s\\
    &+\frac{1}{2}\int_0^1 \!\E\Big[\frac{\phi'(W_s)}{\sqrt{s}}\tilde{X}\!-\!\frac{\phi'(W_s)}{\sqrt{1\!-\!s}}\tilde{Y}\Big]\dd s.
\end{align*}
By Gaussian integration by parts and 
Lemma~3 in \cite{carmona:2006},
\begin{align*}
\E[\phi'(W_s)\tilde{Y}] &= \sqrt{1-s}\,\E[\phi''(W_s)]\var(Y),\\
\E[\phi'(W_s)\tilde{X}] &\le \sqrt{s}\,\E[\phi''(W_s)]\var(X)+\frac{3s}{2}\cE,
\end{align*}
where $\cE = \|\phi'''\|_\infty \E|\tilde{X}|^3$. Then
\begin{align*}
    \E[\phi(X)] - \E[\phi(Y)] &\le  \|\phi'\|_\infty|\E[X]-\E[Y]| \\
    &+ \|\phi'' \|_\infty|\var(X)-\var(Y) |+\frac12\cE.
\end{align*}
Finally, note that $\phi$ is a cumulant generating function of $\eta$,
\begin{align*}
    \|\phi'\|_\infty \le b,~~\|\phi''\|_\infty \le 2\,b^2,~~\|\phi'''\|_\infty \le 8\,b^3.
\end{align*}
Using these bounds, we show one side of the inequality~\eqref{eq:delta_stein}. A symmetric argument shows the other side.
\end{proof}

Define $\cL_n$ as the class of likelihood functions
$L_n\colon\Theta_n \times \bbR^{n^p} \to \bbR$ with $\int \exp\big(L_n(\theta,x)\big)\pi_n(\dd\theta) < \infty$. Then, define the functional $\Phi_n\colon\cL_n \to \{\bbR^{n^p} \to \bbR\}$ by
\begin{align*}
\Phi_n[L_n](x)
\coloneqq
\log \int \exp\big(L_n(\theta,x)\big)\,\pi_n(\dd\theta).
\end{align*}
For notational simplicity, we write $\Phi_n[\,\cdot\,]$ for $\Phi_n[\,\cdot\,](x)$.

Denote $g_{\alpha}^{(1)}\coloneqq g(\eta_\alpha(\theta),S_n(X_\alpha))$ and $g_{\alpha}^{(2)}\coloneqq g(\eta_\alpha(\theta),Y_\alpha)$. Let \(\{\alpha^{(1)},\dots,\alpha^{(n^p)}\}\) be an enumeration of the index set. Then, for $k=0,\dots,n^p$, define\vspace{-0.2em}
\begin{align*}
\tilde{g}^{(k)}\coloneqq \sum_{1\le j\le k} g^{(1)}_{\alpha^{(j)}} + \sum_{k<j\le n^p} g^{(2)}_{\alpha^{(j)}}.
\end{align*}
Following a telescoping argument and triangle inequality,\vspace{-0.2em}
\begin{align*}
    |\tilde F_n(\theta) - F_n^G(\theta)| &=\frac{1}{n}\big| \E_\theta[\Phi_n(\tilde{g}^{(n^p)})] -\E_\theta[\Phi_n(\tilde{g}^{(0)})] \big|   \\
    &\le \frac1n \sum_{k=1}^{n^p}\big|\E_\theta\big[
\Phi_n[\tilde{g}^{(k)}] - \Phi_n(\tilde{g}^{(k-1)})
\big]\big|.
\end{align*}
Denote $\Delta_{n,k} \coloneqq 
\Phi_n[\tilde{g}^{(k)}] - \Phi_n[\tilde{g}^{(k-1)}]$.
For each $k\in[n^p]$, by Lemma~\ref{lemma:stein} with $(X,Y,\eta)=(S_n(X_{\alpha^{(k)}}), Y_{\alpha^{(k)}},\frac{a}{\sigma^2}\,\eta_{\alpha^{(k)}}(\theta))$, and Jensen's inequality, we have
\begin{align*}
|\E_\theta\Delta_{n,k}|
&\le \E_\theta \big|\E_\theta\big[\Delta_{n,k}\mid \{(S_n(X_{\alpha^{(j)}}), Y_{\alpha^{(j)}})\}_{j=1,j \neq k}^{n^p}\big]\big|\\
&\le \big|\E_\theta[S_n(X_{\alpha^{(k)}})]-\E_\theta[Y_{\alpha^{(k)}}]\big|\,\,n^{(1-p)/2}\\
&\quad +\big|\var_\theta(S_n(X_{\alpha^{(k)}}))-\var_\theta(Y_{\alpha^{(k)}})\big|\,\,n^{1-p}\\
&\quad +\E_\theta|S_n(X_{\alpha^{(k)}}) - \E_\theta[S_n(X_{\alpha^{(k)}})]|^3\,\,  n^{3(1-p)/2}.
\end{align*}
By Assumption~\ref{ass:moment}, $|\E_\theta\Delta_{n,k}| \le {n}^{1-p}\delta_n$ for all $k\in[n^p]$. So,
\begin{align}\label{eq:second_compare}
    \sup_{\theta\in\Theta_n}|F_n^G(\theta)-\tilde{F}_n(\theta)|\lesssim\delta_n.
\end{align}
By the triangle inequality, \eqref{eq:first_compare} and~\eqref{eq:second_compare} together complete the proof of Theorem~\ref{thm:main_result}.

\subsection{Proof of Theorem~\ref{thm:chaining_bound}}\label{sec:proof_chaining} 
The proof of the theorem hinges on showing that the Assumption~\ref{cond:2_tails} implies~\eqref{eq:chaining_bound_proof} using generic chaining. The result then follows by the triangle inequality from~\eqref{eq:chaining_bound_proof} and Assumption~\ref{cond:2_centering}--\ref{cond:2_integ}.

We introduce some concepts in generic chaining~\cite{talagrand:2005}. A pseudometric space $(T,d)$ consists of a set $T$ and a non-negative function $d \colon T \times T \to [0, \infty)$ that is a pseudometric.
\begin{itemize}[leftmargin=9pt]
    \item The diameter is $\Delta(T,d)  \coloneqq \sup_{t,u \in T} d(t,u)$;
    \item The covering number $N(T,d,\eps)$ is the minimal number of radius-$\eps$ balls needed to cover $T$;
    \item A central quantity is the $\gamma_\alpha$-functional as defined in~\cite[Definition 2.2.19]{talagrand:2014}, which admits the upper bound as discussed in~\cite[Section~2]{dirksen:2015},
    \begin{align}\label{eq:gamma_ub}
    	\gamma_\alpha(T,d) \le C_\alpha \int_0^\infty \left( \log N(T,d,u) \right)^{1/\alpha} \dd u,
    \end{align}
    where $C_\alpha$ is a constant depending only on $\alpha$. 
\end{itemize}

\begin{lemma}[Lipschitz mapping, Theorem~1.3.6 (b) in~\cite{talagrand:2005}]\label{lemma:Lip_mapping}
Let $(U,d')$ and $(T,d)$ be pseudometric spaces. Suppose that $\phi \colon U \to T$ is surjective and there exists a constant $A > 0$ such that
\[
d\big(\phi(x),\phi(y)\big) \le A\, d'(x,y), \qquad \forall x,y \in U.
\]
Then, for any $\alpha > 0$, $\gamma_\alpha(T,d) \le C_\alpha\, A\, \gamma_\alpha(U,d')$.
\end{lemma}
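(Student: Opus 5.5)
The plan is to build an admissible sequence for $(T,d)$ directly out of one for $(U,d')$, and then compare the two $\gamma_\alpha$ sums term by term.

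Recall from~\cite[Definition 2.2.19]{talagrand:2014} that $\gamma_\alpha(T,d)=\inf\sup_{t\in T}\sum_{k\ge0}2^{k/\alpha}\Delta(A_k(t),d)$. The infimum runs over admissible sequences of partitions $(\cA_k)_{k\ge0}$ of $T$: these are increasing sequences with $|\cA_0|=1$ and $|\cA_k|\le 2^{2^k}$. Here $A_k(t)$ denotes the cell of $\cA_k$ that contains $t$.

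\textbf{Step 1: fix a near-optimal sequence on $U$.} Fix $\eps>0$ and choose an admissible sequence $(\cB_k)$ of $U$ such that
\[
\sup_{x\in U}\sum_k 2^{k/\alpha}\Delta(B_k(x),d')\le \gamma_\alpha(U,d')+\eps.
\]

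\textbf{Step 2: push the partitions forward.} For each $k$, consider the family of images $\{\phi(B):B\in\cB_k\}$. These sets cover $T$, because $\phi$ is surjective, and there are at most $2^{2^k}$ of them. They need not be disjoint, however, so they do not form a partition. I would turn them into a partition by a standard refinement. Enumerate the images as $\phi(B_1),\phi(B_2),\dots$, choose one representative $x_t\in\phi^{-1}(t)$ for each $t$, and assign $t$ to the cell indexed by the $B\in\cB_k$ containing $x_t$. Concretely, set $\cA_k=\{\phi(B\cap X_0):B\in\cB_k\}$, where $X_0=\{x_t:t\in T\}$ is the chosen section. Using one fixed section for all $k$ has two consequences. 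First, nestedness of $(\cB_k)$ passes to $(\cA_k)$, since $B\cap X_0$ refines as $B$ does and $\phi$ restricted to $X_0$ is a bijection onto $T$. Second, the cardinalities are preserved, so $(\cA_k)$ is an admissible sequence of $T$.

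\textbf{Step 3: compare diameters.} For $t\in T$, the cell $A_k(t)=\phi(B_k(x_t)\cap X_0)$, and the Lipschitz hypothesis gives
\[
\Delta(A_k(t),d)\le A\,\Delta(B_k(x_t),d').
\]
Hence
\[
\sum_k 2^{k/\alpha}\Delta(A_k(t),d)\le A\sum_k 2^{k/\alpha}\Delta(B_k(x_t),d')\le A\big(\gamma_\alpha(U,d')+\eps\big).
\]
Taking the supremum over $t$, then the infimum over admissible sequences, and finally letting $\eps\to0$ yields the claim, in fact with $C_\alpha=1$.

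The only delicate point is Step 2. If one takes images of partition cells naively, the result may fail to be a partition or may lose nestedness. Restricting to a single fixed section $X_0$ of $\phi$, which uses the axiom of choice, resolves both issues at once. If one prefers to avoid choice, the alternative is to use the equivalent formulation of $\gamma_\alpha$ via admissible sequences of subsets $T_k$ with $|T_k|\le 2^{2^k}$, taking $T_k=\phi(U_k)$ and using $d(t,T_k)\le A\,d'(x,U_k)$ for any $x\in\phi^{-1}(t)$. That route only gives the bound up to a universal constant $C_\alpha$, which matches the stated form.
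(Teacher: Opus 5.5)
Your proof is correct, and it actually gives the lemma with $C_\alpha=1$. The paper does not prove this statement; it imports it from \cite{talagrand:2005}. The unspecified constant $C_\alpha$ in that statement reflects a different route, which (as I recall the source) is essentially the one you sketch in your last paragraph. One passes to the equivalent description of $\gamma_\alpha$ by sets $U_k\subset U$ with $|U_k|\le 2^{2^k}$ and the quantity $\sup_{x}\sum_k 2^{k/\alpha}d'(x,U_k)$. One then pushes forward $T_k=\phi(U_k)$ and uses $d(\phi(x),T_k)\le A\,d'(x,U_k)$. Going from partitions to nets on $U$ is free: pick one point per cell. Converting the pushed-forward nets on $T$ back into an admissible sequence of partitions is what costs the $\alpha$-dependent factor. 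In exchange, working with nets sidesteps the fact that images of partition cells overlap, so no section of $\phi$ and no nestedness bookkeeping are needed. Your route stays entirely inside the partition definition the paper cites. Fixing one section $X_0$ for all $k$ makes $\phi|_{X_0}$ a bijection, so the following all transfer verbatim: the partition property, nestedness, the cardinality bounds, and the estimate $\Delta(\phi(B\cap X_0),d)\le A\,\Delta(B,d')$. The only price is the choice of $X_0$, and it is innocuous in the paper's use. For the map $\theta\mapsto n^{\frac{1-p}{2}}\theta^{\otimes p}$ in Lemma~\ref{lemma:rank_one_T}, every fiber is $\{\theta\}$ or $\{\theta,-\theta\}$, so an explicit section exists. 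Since Lemma~\ref{lemma:rank_one_T} only needs the bound up to constants, either version serves the paper equally well.
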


\begin{lemma}
\label{lemma:rank_one_T}
Let $
 T = \{ n^{\frac{1-p}{2}} \,\theta^{\otimes p}  \mid \theta \in  [-1,1]^n \}$. Then,  the metrics $d_1$ and $d_2$ defined in~\eqref{eq:two_metrics} satisfy 
 \begin{align*}
    \gamma_1(T,d_1)\lesssim n^{2-p}\,p\,c_n,\quad \gamma_2(T,d_2)\lesssim n^{\frac{3-p}{2}}\,p\,\sqrt{v_n}.
 \end{align*}
\end{lemma}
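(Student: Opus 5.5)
The plan is to reduce both bounds to the cube $U=[-1,1]^n$ through the parametrization $\phi(\theta)=n^{\frac{1-p}{2}}\theta^{\otimes p}$, which maps $U$ onto $T$. We then combine Lemma~\ref{lemma:Lip_mapping} with the entropy bound~\eqref{eq:gamma_ub} on the cube. The first step is a telescoping estimate for rank-one tensors: for $\theta,\theta'\in[-1,1]^n$,
\[
\theta^{\otimes p}-\theta'^{\otimes p}=\sum_{k=1}^p \theta'^{\otimes (k-1)}\otimes(\theta-\theta')\otimes\theta^{\otimes(p-k)} .
\]
Each summand has sup-norm at most $\|\theta-\theta'\|_\infty$. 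Its $\ell_2$ norm is at most $\|\theta-\theta'\|_2\,\|\theta'\|_2^{k-1}\|\theta\|_2^{p-k}\le n^{\frac{p-1}{2}}\|\theta-\theta'\|_2$, since $\|\theta\|_2\le\sqrt n$. Combining this with the prefactor $n^{\frac{1-p}{2}}$ and the definitions~\eqref{eq:two_metrics} (with $s_n=n^{p-1}$) gives
\[
d_1(\phi(\theta),\phi(\theta'))\le \frac{c_n\,p}{n^{p-1}}\,\|\theta-\theta'\|_\infty,\qquad d_2(\phi(\theta),\phi(\theta'))\le \frac{\sqrt{v_n}\,p}{n^{\frac{p-1}{2}}}\,\|\theta-\theta'\|_2 .
\]
By Lemma~\ref{lemma:Lip_mapping}, we get $\gamma_1(T,d_1)\lesssim p\,c_n n^{1-p}\gamma_1(U,\|\cdot\|_\infty)$ and $\gamma_2(T,d_2)\lesssim p\sqrt{v_n}\,n^{\frac{1-p}{2}}\gamma_2(U,\|\cdot\|_2)$.

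It therefore remains to show $\gamma_1([-1,1]^n,\|\cdot\|_\infty)\lesssim n$ and $\gamma_2([-1,1]^n,\|\cdot\|_2)\lesssim n$. Both follow from~\eqref{eq:gamma_ub} with volumetric covering estimates.

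For the sup-norm, a grid of mesh $u$ gives $N(U,\|\cdot\|_\infty,u)\le(1+1/u)^n$ for $u<1$, and the covering number equals $1$ for $u\ge1$. Hence
\[
\int_0^1 n\log(1+1/u)\,\dd u\lesssim n .
\]

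For the Euclidean norm, the diameter is $2\sqrt n$. A sup-norm net of mesh $u/\sqrt n$ is a Euclidean $u$-net, so $\log N(U,\|\cdot\|_2,u)\le n\log(1+\sqrt n/u)$. Substituting $u=\sqrt n\,w$ gives
\[
\int_0^{2\sqrt n}\sqrt{n\log(1+\sqrt n/u)}\,\dd u=n\int_0^2\sqrt{\log(1+1/w)}\,\dd w\lesssim n .
\]
Plugging these two estimates in yields exactly $n^{2-p}p\,c_n$ and $n^{\frac{3-p}{2}}p\sqrt{v_n}$.

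The main obstacle is the $\ell_2$ Lipschitz estimate. A naive bound, for instance going through $\|\cdot\|_\infty$ and multiplying by $\sqrt{n^p}$, loses powers of $n$. The telescoping step must keep the factor $\|\theta-\theta'\|_2$ intact and use $\|\theta\|_2\le\sqrt n$ only for the other $p-1$ factors, which is precisely what makes the $n^{\frac{p-1}{2}}$ cancel against the normalization. Beyond that, the only care needed is with constants: the constants $C_\alpha$ from Lemma~\ref{lemma:Lip_mapping} and~\eqref{eq:gamma_ub} depend only on $\alpha\in\{1,2\}$, so they are absorbed into $\lesssim$, and the explicit factor $p$ is retained from the telescoping sum.
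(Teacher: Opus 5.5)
Your proposal is correct and follows essentially the same route as the paper. Both arguments use the telescoping Lipschitz estimate for $\theta\mapsto n^{\frac{1-p}{2}}\theta^{\otimes p}$ in $\|\cdot\|_\infty$ and $\|\cdot\|_2$, transfer to the cube via Lemma~\ref{lemma:Lip_mapping}, and then apply the entropy bound~\eqref{eq:gamma_ub} with volumetric covering numbers to get $\gamma_1,\gamma_2\lesssim n$; your covering estimates $(1+1/u)^n$ and $(1+\sqrt{n}/u)^n$, with the substitution $u=\sqrt{n}\,w$, are simply a slightly more careful version of the paper's.
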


\begin{proof}
For all $t, u \in T$ there exists $x,y \in [-1,1]^p$ such that $t =n^{\frac{1-p}{2}}  x^{\otimes p}$ and $u = n^{\frac{1-p}{2}} y^{\otimes p} $. The triangle inequality together with the fact that $\|\theta^{\otimes p} \|_q = \|\theta\|_q^p \le n^{p/q}$ for all $\theta \in [-1,1]^n$ gives  
\begin{align*}
\|t - u\|_q &\le n^{\frac{1-p}{2}} \, \Big( \sum_{r=1}^{p} \|x\|_q^{r-1} \|y\|_q^{p-r}  \Big) \| x - y\|_q\\
&\le   n^{\frac{1-p}{2}}  n^{\frac{p-1}{q}} \, p  \, \| x - y\|_q .
\end{align*}
By Lemma~\ref{lemma:Lip_mapping}, the $\gamma_{\alpha}$ functional satisfies 
\begin{align*}
\gamma_1(T,d_1) & \le   n^{1-p} \,c_n\, p \,    \gamma_1\left([-1,1]^n,\|\cdot\|_\infty \right)  \\
\gamma_2(T,d_2) &\le n^{\frac{1-p}{2}} \, \sqrt{v_n}\, p\,  \gamma_2\left([-1,1]^n,\|\cdot\|_2 \right)
\end{align*}
A standard calculation for the covering numbers yields
\begin{align*}
N\left([-1,1]^n , \|\cdot\|_\infty ,  \eps  \right) &\le  \Big(  \frac{2}{\eps}  \wedge 1 \Big)^n , \\
N\left([-1,1]^n , \|\cdot\|_2 ,\eps \right)   &\le \Big(  \frac{2\sqrt{n}  }{\eps}  \wedge 1 \Big)^n.
\end{align*}
By~\eqref{eq:gamma_ub} it follows that
\begin{align*}
	\gamma_1\left([-1,1]^{n} , \|\cdot\|_\infty \right)& \lesssim  2 n  \int_0^1   \log(1/u) \, \dd u  \lesssim  n,  \\
	\gamma_2\left([-1,1]^{n} , \|\cdot\|_2 \right) &\lesssim   2 n  \int_0^{1}  \sqrt{  \log(1/v) } \, \dd v     \lesssim  n.
\end{align*}
Combining these results gives the stated bounds. 
\end{proof}

Under Assumption~\ref{cond:2_tails} it follows from \cite[Theorem~2.10]{boucheron:2013} that $\hat{Z}(t)\coloneqq\sum_{\alpha}Z_\alpha(t)$ satisfies a mixed-tail property: 
\begin{align*}
	\pr*{|\hat{Z}(t) - \hat{Z}(u) | \ge \sqrt{2s} \, d_2(t,u) + s\,  d_1(t,u)}   \le 2 e^{-s}
\end{align*}
for all $t, u \in T_n$ and $s \ge 0$.
By~\cite[Theorem~2.2.23]{talagrand:2014}, it follows that for any finite set $S\subset T_n$ containing $t_0$,
\begin{equation*}
\begin{aligned}
	\E_{\theta}\Big[ \sup_{t\in S} |\hat{Z}(t) - \hat{Z}(t_0) | \Big] &\lesssim \gamma_2(T_n, d_2) + \gamma_1(T_n, d_1)
\end{aligned}
\end{equation*}
By compactness of $T_n$ and a.s.\ continuity of $t \mapsto \hat{Z}(t)$ (Assumption~\ref{cond:2_integ}), this bound extends to the expected supremum over $T_n$, by combining a standard $\varepsilon$-net approximation with monotone convergence. Then, by Lemma~\ref{lemma:rank_one_T}, 
\begin{align}~\label{eq:chaining_bound_proof}
\E_{\theta}\Big[ \sup_{t\in T_n} |\hat{Z}(t) - \hat{Z}(t_0) | \Big] \lesssim  n^{2-p}\,p\,c_n + n^{\frac{3-p}{2}}\,p\,\sqrt{v_n}.
\end{align}
Combining~\eqref{eq:chaining_bound_proof} with Assumption~\ref{cond:2_centering}--\ref{cond:2_integ} proves Theorem~\ref{thm:chaining_bound}.

\subsection{Proof of Theorem~\ref{thm:sbm}}\label{sec:proof_sbm}
In this proof, we specify $\delta_n = 1/\sqrt{d_n}+ 1/\sqrt{s_n-d_n}$.
With this choice, we prove this theorem by verifying Assumptions~\ref{ass:remainder} and~\ref{ass:moment} and then applying Theorem~\ref{thm:main_result}. Verification of Assumption~\ref{ass:remainder} follows from Theorem~\ref{thm:chaining_bound}.

\subsubsection{Verify Assumption~\ref{ass:moment}} Consider the statistic 
\begin{align*}
    S_n(x) = \frac{\sigma^2}{a}\partial_t f_n(0,x) =(b_n\,x+ b_n^{-1}(x-1)).    
\end{align*}
Note that, for any $q>0$, 
\begin{align}\label{eq:bn}
    b_n^q+b_n^{-q}\le (b_n^2+b_n^{-2}+2)^{\frac{q}{2}}\le(s_n\delta_n^2)^{\frac q2}.
\end{align}
By direct calculations with this inequality, we have,
\begin{align*}
    |\E_\theta[S_n(X_\alpha)]-a_*\eta_\alpha(\theta)|&=0 \\
   |\var(S_n(X_\alpha)) - \sigma^2_*| &\lesssim (b_n + b_n^{-1})\, s_n^{-\frac12} \le \delta_n.\\
   \E_\theta[|S_n(X_\alpha)-\E_\theta[S_n(X_\alpha)]|^3]
    &\lesssim (b_n+b_n^{-1})\le s_n^{\frac12}\delta_n,
\end{align*}
which verify Assumption~\ref{ass:moment}.

\smallskip
\subsubsection{Verify Assumption~\ref{ass:remainder}} The verification relies on verifying Assumption~\ref{cond:2_centering},~\ref{cond:2_integ} and~\ref{cond:2_tails} and applying Theorem~\ref{thm:chaining_bound}. Consider the series expansion of~\eqref{eq:model_f} at $t=0$,
\begin{align*}
f_n(t,x) = \sqrt{\lambda}S_n(x)\,t + \frac{1}{2}\partial_t^2f_n(0,x) t^2 + \frac{1}{6}\partial_t^3 f_n(r,x)t^3,
\end{align*}
where $r\in[0,t]$.
By basic identity of the log likelihood,
\begin{align*}
    \lambda\E_0[S_n^2(x)] = -\E_0[\partial_t^2 f_n(0,x)] = \lambda = a^2.
\end{align*}
Therefore, the remainder $R_n(t,x) =f_n(t,x)- g(t,S_n(x))$ has an explicit form,
\begin{align*}
    R_n(t, x) =\frac{1}{2}\Big[\partial_t^2 f_n(0,x) +\lambda  \Big]t^2+\frac{1}{6}\partial_t^3 f_n(r,x)t^3.
\end{align*}
Using the notation of Section~\ref{sec:chaining}, we specify the decomposition~\eqref{eq:R_decomposition} by letting,
\begin{align*}
    \bar{R}_{\alpha}(t)&= \frac{1}{2}\Big[ \E_\theta[\partial_t^2 f_n(0,X_\alpha)]+\lambda\Big]t^2+\frac{1}{6}\partial_t^3 f_n(r,X_\alpha)t^3,\\
    Z_\alpha(t) &= \frac{1}{2}\Big[  \partial_t^2 f_n(0,X_\alpha)-\E_\theta[\partial_t^2 f_n(0,X_\alpha)]\Big]t^2.
\end{align*}
For $\bar{R}_{\alpha}(t)$, direct calculation with \eqref{eq:bn} shows for all $\alpha\in[n]^p$,
\begin{align*}
\E_\theta\big[\sup_{t\in T_n}|\bar{R}_\alpha(t)|\big]\lesssim s_n^{-1}\,\delta_n.
\end{align*}
Sum over $\alpha$ verifies Assumption~\ref{cond:2_centering}. 

\smallskip
\noindent
Then, define $\hat{Z}(t) \coloneqq \sum_{\alpha\in[n^p]} Z_\alpha(t)$, which is
\begin{align*}
    \hat{Z}(t)  = \frac{1}{2}\sum_\alpha \big[ \partial_t^2 f_n(0,X_\alpha)-\E_\theta[\partial_t^2 f_n(0,X_\alpha)]\big] t_\alpha^2.
\end{align*}
With a slight abuse of notation, we set $t_0=0$ in Assumption~\ref{cond:2_integ}. 
Then $\hat{Z}(t_0)=0$, so the Assumption~\ref{cond:2_integ} is verified.

\smallskip
\noindent
Finally, let $V_\alpha\coloneqq \partial_t^2 f_n(0,X_\alpha)-\E_\theta[\partial_t^2 f_n(0,X_\alpha)]$. Then, 
\begin{align*}
    \E_\theta[V_\alpha^2]&= \lambda(b_n^2-b_n^{-2})^2\,\var_\theta[ \,X_\alpha]
    \le 2\,\lambda\, s_n\,\delta_n^2,\\
    |V_\alpha|&\le 2\lambda (b_n^2 +b_n^{-2})\le 2\,\lambda\, s_n\,\delta_n^2.
\end{align*}
Then, for all $q\ge 2$, 
\begin{align*}
\E_\theta[|V_\alpha|^q]\le \E_\theta[|V_\alpha|^2] (2\lambda s_n\delta_n^2)^{q-2}\le (2\,\lambda\, s_n \delta_n^2)^{q-1}.
\end{align*}
Therefore,
$V_\alpha$ with $ v_n=c_n=2\lambda\,s_n\,\delta_n^2$ satisfies 
\begin{align*}
\E_\theta[ |V_\alpha|^q ] \le \frac{q!}{2}v_nc_n^{q-2},~\forall q\ge 2.
\end{align*}
As a consequence, for all $\alpha\in[n]^p$ and all $t,u\in T_n$,
\begin{equation}\label{eq:Z_sub_gamma}
\begin{aligned}
    \E_\theta[ |Z_\alpha(t) - Z_\alpha(u)|^q ] 
    &= \frac{1}{2^q}\E_\theta[|V_\alpha|^q|t_\alpha^2-u_\alpha^2|^q]\\
    &\le \frac{q!}{2} d_{2,\alpha}^2(t,u)\, d_{1,\alpha}^{q-2}(t,u)
\end{aligned}
\end{equation}
where we define,
\begin{align*}
    d_{1,\alpha}(t,u)= \frac{c_n}{\sqrt{s_n}}| t_\alpha - u_\alpha|,\quad d_{2,\alpha}(t,u)=\sqrt{\frac{v_n}{s_n}} | t_\alpha - u_\alpha| .
\end{align*}
Summing over $\alpha$ on both sides of~\eqref{eq:Z_sub_gamma} verifies Assumption~\ref{cond:2_tails}. 
Since Assumptions~\ref{cond:2_centering},~\ref{cond:2_integ}, and~\ref{cond:2_tails} are all satisfied, 
we apply Theorem~\ref{thm:chaining_bound} with $c_n = v_n = 2\lambda\,s_n \delta_n^2$, which verifies Assumption~\ref{ass:remainder}. 
This completes the proof of Theorem~\ref{thm:sbm}.



\section{Conclusion and Future Work}

We established a general universality result for tensor estimation problems, showing that their free energy is well approximated by that of a Gaussian comparison model under broad conditions. Our approach uses generic chaining to control remainder terms, enabling us to handle tensor-structured models and dimension-dependent regimes. As an application, we proved universality for binary hypergraph models under minimal assumptions on the average degree.

Several directions for future work remain. One is to relax the conditions on the remainder process to accommodate heavy-tailed noise. Another is to extend the framework to models with dependent observations, such as those arising from orthogonally invariant (but non-Gaussian) matrix ensembles.

\clearpage
\balance
\bibliographystyle{IEEEtran}
\bibliography{long_names,library,universality,more_refs}

\end{document}